\documentclass[12pt,a4paper]{article}
\textheight=24cm
\textwidth=16cm
\topmargin=-1cm
\oddsidemargin=-0.1mm
\usepackage{amsmath,amssymb}
\usepackage{amsfonts,amsthm,enumerate}
\newtheorem{theorem}{Theorem}
\newtheorem{lemma}[theorem]{Lemma}
\theoremstyle{definition}

\newcommand{\generate}[1]{\langle #1\rangle}

\title{\bf  The existence of free non-cyclic subgroups in weakly locally finite division rings}

\author { Bui Xuan Hai\footnote{Corresponding author, Faculty of Mathematics and Computer Science, University of Science, VNU-HCMC, 227 Nguyen Van Cu Str., Dist. 5, HCM-City, Vietnam,  e-mail: bxhai@hcmus.edu.vn} ~and  Nguyen Kim Ngoc\footnote{Faculty of Mathematics and Computer Science, University of Science, VNU-HCMC, 227 Nguyen Van Cu Str., Dist. 5, HCM-City, Vietnam,  e-mail: nkngoc1985@gmail.com}}

\begin{document}
\baselineskip=18pt
\maketitle
\def\Q{\mathbb{Q}}
\newcommand{\ts}[1]{\langle #1\rangle}
\newcommand{\Fp}{\mathbb{Z}_p}
\begin{abstract} In this paper we prove that every non-central subnormal subgroup of the multiplicative group of a weakly locally finite division ring contains free non-cyclic subgroups. 
\end{abstract}

{\bf {\em Key words:}} weakly locally finite; free non-cyclic subgroups.  

{\bf {\em 2010 Mathematics Subject Classification:}} 16K20

\vspace*{0.5cm}

In \cite{tits}, J. Tits proved that every finitely generated subgroup of a matrix group over a field is either solvable-by-finite or contains non-cylic free group. In [2, p. 736], S. Bachmuth raised the question of whether Tits' theorem would be true for a matrix group over a division ring. In \cite{lich1}, Lichtman proved that Tits' theorem fails even for matrices of degree one, i.e. for $D^*={\rm GL}_1(D)$, where $D$ is a division ring. In this work \cite{lich1}, Lichtman remarked that the question of whether the multiplicative group of a division ring contains a non-cyclic free group remains without the answer. For the convenience, in \cite{Gon-Man}, Z. Goncalves and A. Mandel formulate this Lichtman's question as the following conjecture: 

\noindent
{\bf Conjecture 1.} {\em The multiplicative group of a non-commutatvie division ring contains a non-cyclic free subgroup.}

Moreover, they posed the following stronger conjecture:

\noindent
{\bf Conjecture 2.} {\em Any non-central subnormal subgroup of the multiplicative group of a non-commutatvie division ring contains a non-cyclic free subgroup.}

In \cite{Gon1}, Goncalves proved that Conjecture 1 is true for centrally finite division rings. In \cite{reich}, Reichstein and Vonessen showed that the multiplicative group $D^*$ of a division ring $D$ contains a  free subgroup on two generators if the center $F$ of $D$ is uncountable and there exists a non-central element $a\in D$ which is  algebraic over $F$ [13, Theorem 1]. Later, Chiba [3, Theorem 2] proved this result without the assumption of the existence of such an element $a\in D$.  In \cite{Gon-Man}, Goncalves and Mandel showed that Conjecture 2 is true in some particular cases. More exactly, they proved that a subnormal subgroup $G$ of $D^*$ contains a non-cyclic free subgroup if  $G$ contains some element $x\in D\setminus F$, which is algebraic over the center $F$ of $D$ such that either (a) ${\rm Gal}(F(x)/F)\neq 1$ or (b) $x^p\in F$, where $p=2$ or $p={\rm char}(F)>0$. The affirmative answer to Conjecture 2 was also obtained for centrally finite division rings by Goncalves in  \cite{Gon2}. Note that the affirmative answer to the question above would imply some known theorems like the commutativity theorems  of Kaplansky, Jacobson, Hua, Herstein, Stuth,... For more information, we refer to \cite{her2}. In this paper, we shall prove that Conjecture 2  is true for weakly locally finite division rings. The notion of weakly locally finite division rings was introduced firstly in \cite{hbd}. Recall that a division ring $D$ is called {\em weakly locally finite},  if for every finite subset $S$ of $D$, the division subring of $D$ generated by $S$ is centrally finite. It was proved in \cite{hbd} that every locally finite division ring is weakly locally finite and the converse is not true. In \cite{hbd}, it was given the example of weakly locally finite division ring which is not even algebraic over the center. Therefore, the class of weakly locally finite division rings is vast and our result is a broad generalization of Goncalves' result in \cite{Gon2}.  For more information about these division rings and their properties, we refer to \cite{hbd} and \cite{dbh}.

The following  more important Tits' result in \cite{tits} is often refered as Tits' Alternative and will be used in the proof of our Theorem 1 in the next: 
\\ \\
\noindent
{\bf Tits' Alternative.} \cite{tits} {\em  For a field $F$, the following statements hold:

(i) If ${\rm char}(F)=0$, then every subgroup of ${\rm GL}_n(F)$ either is solvable-by-finite or contains non-cyclic free subgroups.

(ii) If $H$ is a finitely generated subgroup of ${\rm GL}_n(F)$, then either $H$ is solvable-by-finite or $H$ contains  non-cyclic free subgroups.}

We need also one result from \cite{haz-mah-mot} and for the convenience, we now restate it. 
\\ \\
 \noindent
{\bf Lemma A.} \cite{haz-mah-mot} {\em Let $D$ be a centrally division ring. If $G$ is a solvable subgroup of $D^*$, then $G$ contains an abelian normal subgroup $A$ of finite index.}

Now we are ready to prove the following theorem, which can be considered as Tits' Alternative for weakly locally finite division rings.

\begin{theorem}\label{th:2.1}
Let $D$ be a weakly locally finite division ring with center $F$ and $G$ be a subgroup of $D^*$. Then, the following statements are equivalent:

(i) $G$ contains no non-cyclic free subgroups.

(ii) $G$ is locally solvable-by-finite.

(iii) $G$ is  locally abelian-by-finite.

(iv) Every finitely generated subgroup of $G$ satisfies a group identity. 
\end{theorem}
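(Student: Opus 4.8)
The plan is to prove the cycle of implications $(i) \Rightarrow (ii) \Rightarrow (iii) \Rightarrow (iv) \Rightarrow (i)$, exploiting at each stage the defining property that finitely generated division subrings are centrally finite. The key structural observation is that every finitely generated subgroup $H$ of $G$ lives inside $D_0^*$, where $D_0$ is the division subring generated by the finitely many entries of the generators of $H$; by weak local finiteness $D_0$ is centrally finite, so $D_0$ embeds into a matrix ring $M_n(K)$ over a maximal subfield $K$, and thus $H$ becomes a subgroup of $\mathrm{GL}_n(K)$. This is the bridge that lets us import Tits' Alternative, which is stated for linear groups over fields.

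\emph{Proof of $(i)\Rightarrow(ii)$.} Suppose $G$ has no non-cyclic free subgroup, and let $H$ be any finitely generated subgroup. As above, $H \leq \mathrm{GL}_n(K)$ for a field $K$, and $H$ is itself finitely generated, so part (ii) of Tits' Alternative applies: either $H$ is solvable-by-finite or $H$ contains a non-cyclic free subgroup. The latter is impossible since $H \leq G$, so $H$ is solvable-by-finite. As $H$ was arbitrary, $G$ is locally solvable-by-finite.

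\emph{Proof of $(ii)\Rightarrow(iii)$.} Let $H$ be a finitely generated subgroup of $G$; by hypothesis $H$ has a solvable normal subgroup $S$ of finite index. The point is to upgrade ``solvable'' to ``abelian'' using Lemma A. Since $S$ sits inside $D_0^*$ with $D_0$ centrally finite, and $S$ is solvable, Lemma A gives an abelian normal subgroup $A \trianglelefteq S$ with $[S:A] < \infty$. I would then replace $A$ by the core of $A$ in $H$ (the intersection of its finitely many $H$-conjugates) to obtain an abelian subgroup that is normal in $H$ and still of finite index, since $[H:A] = [H:S][S:A] < \infty$ and a finite-index subgroup has only finitely many conjugates. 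Hence $H$ is abelian-by-finite, and $(iii)$ follows.

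\emph{Proof of $(iii)\Rightarrow(iv)$ and $(iv)\Rightarrow(i)$.} The implication $(iii)\Rightarrow(iv)$ is the routine direction: if $H$ is abelian-by-finite with abelian $A \trianglelefteq H$ of index $m$, then $H$ satisfies a group identity built from the commutator law on $A$ together with the bound $m$ on the index (for instance an identity forcing any two $m!$-th-power-type elements to commute); I would write down an explicit word $w$ vanishing on all such groups. For $(iv)\Rightarrow(i)$, observe that a group identity is inherited by all subgroups, whereas a non-cyclic free group satisfies \emph{no} nontrivial identity; so if every finitely generated subgroup of $G$ satisfies an identity, $G$ can contain no non-cyclic free subgroup, giving $(i)$. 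The main obstacle I anticipate is the embedding step feeding into Tits' Alternative: one must check carefully that a centrally finite division ring $D_0$ embeds into $\mathrm{GL}_n$ over a commutative field (via a maximal subfield $K$ of dimension $n$ over the center $F_0$ of $D_0$, using $D_0 \otimes_{F_0} K \cong M_n(K)$), and that this embedding is a group homomorphism on $D_0^*$ compatible with passing to the finitely generated subgroup $H$. Everything else is bookkeeping around finite index and the local nature of the hypotheses.
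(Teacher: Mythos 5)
Your proposal is correct and follows essentially the same route as the paper: reduce to a finitely generated subgroup sitting in a centrally finite division subring, embed that into $\mathrm{GL}_n$ over a field to invoke Tits' Alternative for $(i)\Rightarrow(ii)$, apply Lemma A plus a core (finite intersection of conjugates) argument for $(ii)\Rightarrow(iii)$, and use the identity $[x^k,y^k]$ and the fact that free groups satisfy no identity for the last two implications. The only difference is cosmetic: you embed via a maximal subfield splitting $D_0\otimes_{F_0}K\cong M_m(K)$, whereas the paper uses the regular representation of the centrally finite subring over its center; both are standard and equally valid here.
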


\begin{proof}

\noindent
$(i)\Longrightarrow (ii)$.  Suppose  (i) holds and consider a finite subset $S$ of $G$. Let $K$ and $H$ be respectively a division subring of $D$  and  a subgroup of $G$  generated by $S$. Since $D$ is  weakly locally finite, $K$ is centrally finite. If $n=[K: Z(K)]$, then $H$ can be viewed as a subgroup of ${\rm GL}_n(Z(K))$. Then, by Tits' Alternative, $H$ is solvable-by-finite; hence $G$ is locally solvable-by-finite. 

\noindent
$(ii)\Longrightarrow (iii)$. Using the same symbols  as above in (i), we see that there exists a normal solvable subgroup $A$ of $H$ such that $H/A$ is finite. Since $H$ is a subgroup of $K^*$, by Lemma A, there exists a normal abelian subgroup $B$ of $A$ such that $A/B$ is finite. So, if $C=\cap_{h\in H} B^h$, then $C$ is a normal subgroup of $H$ such that $H/C$ is finite. Thus, $H$ is abelian-by-finite, and (iii) holds.

\noindent
$(iii)\Longrightarrow (iv)$. Let $H$ be a finitely generated subgroup of $G$. Then, there exists a normal abelian subgroup $A$ of $H$ such that $[H: A]=k$. Then, $[x^k, y^k]$ is an identity for $H$. 

 \noindent
$(iv)\Longrightarrow (i)$. It is evident.
 \end{proof}

\begin{lemma}\label{lem:2.2} Let $D$ be a division ring algebraic over its center $F$. If $G$ is a (locally solvable)-by-(locally finite) subnormal subgroup of $D^*$ then $G\subseteq F$.
\end{lemma}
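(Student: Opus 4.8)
The plan is to argue by contradiction: assume $G\not\subseteq F$ and force a non-cyclic free subgroup inside $G$, which is impossible for a group of the stated form. The first step is the elementary observation that a (locally solvable)-by-(locally finite) group contains no non-cyclic free subgroup. Indeed, let $N\lhd G$ be locally solvable with $G/N$ locally finite, and suppose $W\le G$ were free of rank two. Then $WN/N\cong W/(W\cap N)$ is a finitely generated subgroup of the locally finite group $G/N$, hence finite, so $W\cap N$ has finite index in $W$; by Nielsen--Schreier $W\cap N$ is then free of rank $\ge 2$, while being finitely generated and locally solvable it is solvable --- a contradiction. Thus it suffices to show that a non-central subnormal $G$ \emph{does} contain a non-cyclic free subgroup.

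To manufacture such a subgroup I would invoke the Goncalves--Mandel criterion recalled in the introduction: a subnormal subgroup of $D^{*}$ contains a non-cyclic free subgroup as soon as it contains some $x\in D\setminus F$ (automatically algebraic over $F$ here) satisfying either (a) ${\rm Gal}(F(x)/F)\ne 1$ or (b) $x^{p}\in F$ with $p=2$ or $p={\rm char}(F)>0$. The problem therefore reduces to producing, from the assumption $G\not\subseteq F$, an element of $G$ satisfying (a) or (b). Fix $a\in G\setminus F$; since $D$ is algebraic over $F$, the extension $F(a)/F$ is finite of degree $\ge 2$. I would also use the classical structure of subnormal subgroups: a non-central subnormal subgroup of $D^{*}$ generates $D$ as a division ring, so not every element of $G$ commutes with $a$ (else $G$ would lie in the proper division subring $C_{D}(a)$). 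Hence there is $g\in G$ with $a':=gag^{-1}\ne a$, and $a'\in G$ is a second root of the minimal polynomial of $a$ over $F$.

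Now I would run a case analysis on $F(a)/F$. If this extension is purely inseparable then ${\rm char}(F)=p>0$ and a suitable power of $a$ gives $x\notin F$ with $x^{p}\in F$, i.e. (b); more generally, if the smallest $m\ge 1$ with $a^{m}\in F$ is even, or if $p\mid m$, then an appropriate power of $a$ (using the $2$-part or $p$-part of $m$) again yields (b). If $F(a)/F$ is separable and $F(a)$ carries a non-trivial $F$-automorphism, then (a) holds for $a$ directly. The residual --- and genuinely hardest --- case is the \emph{separable, rigid} one: ${\rm Gal}(F(a)/F)=1$ and no admissible power of $a$ lands in $F$, the model being $a=\sqrt[3]{2}$ over $\Q$, for which neither (a) nor (b) holds. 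Here I would press the conjugate $a'$ into service: when $a$ and $a'$ commute they generate a finite commutative extension $F(a,a')$ containing two distinct roots of the same irreducible polynomial (so its normal closure enlarges and becomes Galois), and I would try to extract from the abelian group $\langle a,a'\rangle\subseteq G$ an element --- such as a ratio $a^{-1}a'$, which in binomial cases is a non-trivial root of unity with cyclotomic, hence Galois, field --- satisfying (a).

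The main obstacle is exactly this separable non-normal case. Neither $a$ nor its powers need satisfy (a) or (b), so one must pass to a conjugate and then \emph{guarantee} that the element actually produced lies in $G$, lies outside $F$, and meets (a) or (b); controlling which element of $\langle a,a'\rangle$ generates a subfield with a non-trivial $F$-automorphism is delicate. Worse is the subcase where $a$ and $a'$ do \emph{not} commute: one is then faced with two non-commuting elements sharing a minimal polynomial, and since $D$ is only assumed algebraic over $F$ (not weakly locally finite), the division subring $F(a,a')$ need not be centrally finite, so Tits' Alternative cannot be applied to it directly. Resolving this last subcase --- presumably by a further appeal to subnormality to relocate the argument inside a centrally finite subring or quotient where Lemma A and Tits' Alternative become available --- is where the real work of the proof lies.
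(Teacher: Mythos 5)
Your proposal has a genuine gap, and you have located it yourself: the separable non-normal case. Reducing the lemma to the claim that every non-central subnormal subgroup of $D^*$ contains a non-cyclic free subgroup is a reduction to something strictly \emph{harder} than the lemma --- essentially Conjecture~2 for division rings algebraic over their centers, which is open and is exactly what this lemma is a tool toward proving (for the weakly locally finite subclass). The Goncalves--Mandel criterion genuinely does not cover elements such as $\sqrt[3]{2}$ over $\mathbb{Q}$, and none of the devices you sketch (powers of $a$, the ratio $a^{-1}a'$, the field $F(a,a')$) is actually shown to produce an element that simultaneously lies in $G$, lies outside $F$, and satisfies (a) or (b); the subcase where $a$ and its conjugate do not commute is left entirely open, as you concede. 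Your opening observation --- that a (locally solvable)-by-(locally finite) group contains no free subgroup of rank two, via Nielsen--Schreier --- is correct, but it carries none of the weight of the proof.

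The paper takes a different and much shorter route that never mentions free subgroups: it chains together known centrality theorems for subnormal subgroups of $D^*$. With $S\lhd G$ locally solvable and $G/S$ locally finite, one argues: (1) $S$ is itself a locally solvable subnormal subgroup of $D^*$, hence $S\subseteq F$ by [8, Theorem 2.4]; (2) consequently $G/Z(G)$ is locally finite, so by the Schur-type result [1, Lemma 3] the derived subgroup $G'$ is locally finite, hence a torsion subnormal subgroup of $D^*$, hence $G'\subseteq F$ by Herstein's theorem [11, Theorem 8]; (3) $G$ is then solvable, and Stuth's theorem [14, 14.4.4, p.~440] forces $G\subseteq F$. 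If you wish to repair your argument, the fix is to abandon the free-subgroup construction and substitute these three centrality results, at which point the contradiction framework becomes unnecessary.
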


\begin{proof} By the assumption, there exists a locally solvable normal subgroup $S$ of $G$ such that $G/S$ is locally finite. So, $S$ is a locally solvable subnormal subgroup of $D^*$ and by [8, Theorem 2.4], $S\subseteq F$. Hence, $G/Z(G)$ is locally finite, so by [1, Lemma 3], $G'$ is locally finite. Thus, $G'$ is a torsion subnormal subgroup of $D^*$ and by [11, Theorem 8], $G'\subseteq F$. Hence, $G$ is solvable and by [14, 14.4.4, p. 440], $G\subseteq F$.
\end{proof}

\begin{lemma}\label{lem:2.3} Let $D$ be a weakly locally finite division ring with center $F$. If $G$ is a (locally solvable)-by-(locally finite) subnormal subgroup of $D^*$ then $G\subseteq F$.
\end{lemma}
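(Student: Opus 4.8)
The plan is to reduce the statement to Lemma~\ref{lem:2.2} by a localization argument, exploiting the fact that weak local finiteness makes every finitely generated division subring centrally finite, hence algebraic over its own center. Concretely, it suffices to prove that an arbitrary element $a\in G$ is central in $D$; for this I will fix an arbitrary $d\in D$ and aim to show $ad=da$, which would place $a$ in $F=Z(D)$ and, $a$ being arbitrary, give $G\subseteq F$.

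First I would let $K$ denote the division subring of $D$ generated by the two-element set $\{a,d\}$. Since $D$ is weakly locally finite, $K$ is centrally finite, so $[K:Z(K)]<\infty$ and in particular $K$ is algebraic over its center $Z(K)$. Thus Lemma~\ref{lem:2.2} is applicable to the division ring $K$ with center $Z(K)$, provided I can exhibit inside $K^*$ a subnormal subgroup of the required type that contains $a$.

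The natural candidate is $G\cap K^*$. To see it is subnormal in $K^*$, take a subnormal chain $G=G_0\triangleleft G_1\triangleleft\cdots\triangleleft G_r=D^*$ and intersect every term with $K^*$: a direct check shows $G_i\cap K^*\triangleleft G_{i+1}\cap K^*$, and the top term is $D^*\cap K^*=K^*$, so $G\cap K^*$ is subnormal in $K^*$. Next, writing $S\triangleleft G$ for a locally solvable normal subgroup with $G/S$ locally finite, the subgroup $S\cap K^*$ is normal in $G\cap K^*$, is locally solvable (being a subgroup of a locally solvable group), and the quotient $(G\cap K^*)/(S\cap K^*)$ embeds into $G/S$, hence is locally finite. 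Therefore $G\cap K^*$ is again (locally solvable)-by-(locally finite). Applying Lemma~\ref{lem:2.2} yields $G\cap K^*\subseteq Z(K)$; since $a\in G$ lies in $K^*$, we get $a\in Z(K)$, and so $a$ commutes with $d$. As $d$ and then $a$ were arbitrary, $G\subseteq F$.

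The only genuinely delicate points, and where I would spend the most care, are the two descent verifications: that intersecting the subnormal chain with $K^*$ preserves each normality relation, and that the (locally solvable)-by-(locally finite) structure passes to $G\cap K^*$. Both are elementary once stated, and the key conceptual move is simply recognizing that the localization to $K=\langle a,d\rangle$ manufactures exactly the ``algebraic over the center'' hypothesis needed to invoke Lemma~\ref{lem:2.2}, with no need to enlarge the generating set to capture the subnormal chain.
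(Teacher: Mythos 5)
Your proof is correct, but it reduces to Lemma~\ref{lem:2.2} along a different route than the paper does. The paper localizes \emph{inside $S$}: it takes $x,y\in S$, applies Lemma~\ref{lem:2.2} to the locally solvable subnormal subgroup $S\cap K^*$ of $K^*$ (where $K=\generate{x,y}$) to conclude that $S$ is abelian, then invokes Stuth's theorem to get $S\subseteq F$, and finally has to re-run the entire second half of the proof of Lemma~\ref{lem:2.2} ($G/Z(G)$ locally finite $\Rightarrow$ $G'$ torsion subnormal $\Rightarrow$ $G'\subseteq F$ $\Rightarrow$ $G$ solvable $\Rightarrow$ $G\subseteq F$). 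You instead localize at one element $a\in G$ together with an \emph{arbitrary} $d\in D$, and transfer the full hypothesis --- subnormality via intersecting the chain, and the (locally solvable)-by-(locally finite) structure via $S\cap K^*$ and the embedding of $(G\cap K^*)/(S\cap K^*)$ into $G/S$ --- so that a single application of Lemma~\ref{lem:2.2} gives $a\in Z(K)$ and hence $ad=da$ directly. All the descent verifications you flag do go through (intersection with $K^*$ preserves each normality relation in the chain, and $D^*\cap K^*=K^*$), so your argument is complete. What your version buys is economy: Lemma~\ref{lem:2.2} is used exactly once and at full strength, with no repetition of its internal machinery and no separate appeal to Stuth's theorem at the top level; the small price is the extra (routine) bookkeeping needed to check that the (locally solvable)-by-(locally finite) property descends to $G\cap K^*$, which the paper's weaker localization avoids.
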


\begin{proof} By the assumption, there exists a locally solvable normal subgroup $S$ of $G$ such that $G/S$ is locally finite. Take any $x, y\in S$ and denote by $K$ the division subring of $D$ generated by $x$ and $y$. Then, $K$ is a finite dimensional vector space over its center $Z(K)$ and $S\cap K^*$ is a locally solvable subnormal subgroup of $K^*$. Hence, by Lemma \ref{lem:2.2}, 
$S\cap K^*\subseteq Z(K)$ and, in particular $xy=yx$. So, $S$ is abelian and in view of [14, 14.4.4, p. 440], $S\subseteq F$. Further, as in the proof of Lemma \ref{lem:2.2}, we conclude that $G\subseteq F$.
\end{proof}

Let $D$ be a weakly locally finite division ring with center $F$. Suppose that $Y=\generate{y_1, \ldots, y_s}$ is a finitely generated subgroup of ${\rm GL}_n(D)$. Denote by $K$ the division subring of $D$ generated by all the entries of all matrices $y_1, \ldots, y_s$. Then, $K$ is centrally finite and we denote by $m=[K: Z(K)]$, where $Z(K)$ is  the center of $K$. Now, $Y$ can be viewed as a subgroup of ${\rm GL}_{mn}(Z(K))$. So, $Y$ is a $CZ$-group with  Zariski's topology, defined on $M_{mn}(Z(K))$. It is clear that every subgroup $X$ of $Y$ is a $CZ$-group too. For such a subgroup $X$, denote by $X^0$ the connected component of $X$ containing the identity $1$. Now, for a subgroup $G$  of ${\rm GL}_n(D)$, we denote by $G^+:=\cup_Y(G\cap Y)^0$, where $Y$ runs on the set of all finitely generated subgroups of $G$. 

\begin{lemma}\label{lem:2.4} $G^+$ is a normal subgroup of $G$ and $G/G^+$ is locally finite.
\end{lemma}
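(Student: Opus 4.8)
The plan is to deduce everything from two structural facts about the connected components $Y^{0}$ attached to finitely generated subgroups $Y\le G$: (a) each $Y^{0}$ is a normal subgroup of $Y$ of finite index, and (b) these components are monotone, i.e. if $Y_{1}\subseteq Y_{2}$ are finitely generated subgroups of $G$, then $Y_{1}^{0}\subseteq Y_{2}^{0}$. Fact (a) is the standard statement that a linear algebraic group has only finitely many connected components, together with the observation that the connected component of the identity in a topological group is always a normal subgroup; since $Y$ carries the subspace Zariski topology inherited from $M_{mn}(Z(K))$ and multiplication and inversion are restrictions of morphisms, $Y$ is a topological group and $Y^{0}\trianglelefteq Y$ with $[Y:Y^{0}]<\infty$. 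Note also that for $Y\le G$ one has $G\cap Y=Y$, so $(G\cap Y)^{0}=Y^{0}$ and $G^{+}=\bigcup_{Y}Y^{0}$.

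Granting (a) and (b), both the subgroup property and normality follow at once from the directedness of the family of finitely generated subgroups. Let $a,b\in G^{+}$ and $g\in G$ be arbitrary; choose finitely generated $Y_{1},Y_{2}\le G$ with $a\in Y_{1}^{0}$ and $b\in Y_{2}^{0}$, and put $Y=\langle Y_{1},Y_{2},g\rangle$, again a finitely generated subgroup of $G$. By (b), $a,b\in Y^{0}$; since $Y^{0}$ is a subgroup we get $ab^{-1}\in Y^{0}\subseteq G^{+}$, and since $Y^{0}\trianglelefteq Y$ and $g\in Y$ we get $gag^{-1}\in Y^{0}\subseteq G^{+}$. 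As $1\in Y^{0}$ shows $G^{+}\neq\varnothing$, this proves simultaneously that $G^{+}$ is a subgroup and that it is normal in $G$.

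For the quotient I would check that every finitely generated subgroup of $G/G^{+}$ is finite. Such a subgroup has the form $HG^{+}/G^{+}\cong H/(H\cap G^{+})$ for some finitely generated $H\le G$. Because $H$ is finitely generated it contributes $H^{0}$ to the union defining $G^{+}$, so $H^{0}\subseteq H\cap G^{+}$; hence $[H:H\cap G^{+}]\le[H:H^{0}]$, which is finite by (a). Therefore $HG^{+}/G^{+}$ is finite and $G/G^{+}$ is locally finite.

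The real work, and the step I expect to be the main obstacle, is the monotonicity (b). The difficulty is that $Y_{1}$ and $Y_{2}$ are embedded in a priori different matrix algebras $M_{m_{1}n}(Z(K_{1}))$ and $M_{m_{2}n}(Z(K_{2}))$ over different centers, so the subspace Zariski topologies, and with them the connected components, are not obviously comparable; indeed, for an abstract group the identity component genuinely depends on the chosen faithful linear representation. To handle (b) I would pass to a common ambient algebra, viewing both $Y_{1}\subseteq Y_{2}$ inside $\mathrm{GL}_{mn}(Z(K))$ for the division subring $K$ generated by the entries of generators of $Y_{2}$, and then invoke that the identity component is monotone for closed subgroups of a fixed algebraic group ($\overline{Y_{1}}^{0}\subseteq\overline{Y_{2}}^{0}$, since a connected set through the identity lies in the identity component). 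The point that must be justified carefully within the $CZ$-group framework is that the component $Y_{1}^{0}$ computed in $Y_{1}$'s own algebra agrees with, or is contained in, the one computed after enlarging $K_{1}$ to $K$, i.e. that enlarging the generated division subring, which only refines the topology by a regular representation over a larger center, does not shrink the identity component. This coherence of the connected-component construction along the directed system of finitely generated subgroups is the crux on which the whole argument rests.
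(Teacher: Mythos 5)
Your strategy is essentially the paper's: everything is reduced to the directedness of the family of finitely generated subgroups together with the two facts (a) $Y^{0}\trianglelefteq Y$ with $[Y:Y^{0}]<\infty$ and (b) monotonicity of $Y\mapsto Y^{0}$. Within that frame your treatment of local finiteness is actually cleaner than the paper's: you use only the inclusion $H^{0}\subseteq H\cap G^{+}$, which is immediate from the definition of $G^{+}$, and bound $[H:H\cap G^{+}]$ by $[H:H^{0}]$; the paper instead constructs an injective homomorphism $H/(H\cap G^{+})\to H/H^{+}$ (whose well-definedness tacitly also requires $H\cap G^{+}\subseteq H^{+}$) and quotes [15, 3.1.3]. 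Likewise your derivation of normality from (b) together with $Y^{0}\trianglelefteq Y$ replaces the paper's appeal to [17, Lemma 5.2]; both are fine once (a) and (b) are in hand.

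The one step you do not discharge is (b), and you are right that this is where the real content sits: at the corresponding point the paper simply writes ``Then, $x,y\in (G\cap Y)^{0}$'' with no justification. Your worry is legitimate, because the identity component of an abstract group genuinely depends on the faithful representation chosen (for instance $\langle -1\rangle\times\langle 2\rangle\le \mathbb{C}^{*}$ is connected in the Zariski topology of $\mathrm{GL}_1(\mathbb{C})$, while its isomorphic copy $\{\mathrm{diag}(\pm 1,2^{n})\}\le \mathrm{GL}_2(\mathbb{C})$ has component group of order $2$), and here the centers $Z(K_1)$ and $Z(K_2)$ of the two division subrings need not even be comparable fields. So the coherence of the components along the directed system must be extracted from the $CZ$-group formalism of [17, Chapter 5]; the statement the paper cites at the conjugation step, [17, Lemma 5.2], is precisely the sort of tool needed (a homomorphism of $CZ$-groups carries the identity component into the identity component, applied to the inclusion $Y_1\hookrightarrow Y_2$ read in the two matrix representations). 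You have correctly located the load-bearing step and indicated the right fix, but have not carried it out --- though, to be fair, the paper does no more than assert it either.
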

\begin{proof} Consider any two elements $x$ and $y$ of $G^+$.  Suppose  $Y_1$ and $Y_2$ are finitely generated subgroups of ${\rm GL}_n(D)$ with finite generating subsets $S_1$ and $S_2$ respectively, such that $x\in (G\cap Y_1)^0$ and $y\in (G\cap Y_2)^0$. Denote by $Y$ the subgroup of ${\rm GL}_n(D)$ generated by $S_1$ and $S_2$. Then, $x, y\in (G\cap Y)^0$, so $xy\in (G\cap Y)^0$ and consequently, $x, y\in G^+$. Evidently, the inverse element of any element from $G^+$ belongs to $G^+$. So, $G^+$ is a subgroup of $G$. Now, suppose that $x\in G^+$ and $g\in G$. Then, $x\in (G\cap Y)^0$ and $g\in (G\cap Y)$ for some  finitely generated subgroup $Y$of $G$. If $S$ is a finite generating subset of $Y$ then we denote by $Y_g$ the subgroup of $G$ generated by $g$ and $S$.   By [17, Lemma 5.2], $gxg^{-1}\in (G\cap Y_g)^0\leq G^+$. Therefore, $G$ is normal in $G$. 

Now, suppose $x_1, \ldots, x_r\in G$. Denote by $H$  the subgroup of $G$ generated by $x_1, \ldots, x_r$. Then, clearly
$$\generate{x_1G^+, \ldots, x_rG^+}=HG^+/G^+.$$

We have $HG^+/G^+\cong H/H\cap G^+$ and the map 
$$\varphi: H/H\cap G^+\longrightarrow H/H^+$$
sending every $h(H\cap G^+)$ to $hH^+$ is injective group homomorphism. By [15, 3.1.3, p.81], $H/H^+$ is finite, so $H/H\cap G^+$ is finite too. Thus, $G/G^+$ is locally finite, as it was required to prove.
\end{proof}

\begin{lemma}\label{lem:2.5} Let $D$ be a weakly locally finite division ring. If $G$ is a subgroup of ${\rm GL}_n(D)$ then the following statements are equivalent:

(i) $G$ is locally solvable-by-finite.

(ii) $G$ is (locally solvable)-by-(locally finite).

(iii) $G^+$ is locally solvable.
\end{lemma}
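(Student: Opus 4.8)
The plan is to prove the cycle of implications $(iii)\Rightarrow(ii)\Rightarrow(i)\Rightarrow(iii)$, with the last implication carrying essentially all of the content; the first two are soft consequences of Lemma~\ref{lem:2.4} together with elementary group theory. For $(iii)\Rightarrow(ii)$, if $G^+$ is locally solvable then, since Lemma~\ref{lem:2.4} already provides that $G^+$ is normal in $G$ with $G/G^+$ locally finite, the subgroup $S:=G^+$ exhibits $G$ directly as (locally solvable)-by-(locally finite); nothing further is needed. For $(ii)\Rightarrow(i)$, write $S\trianglelefteq G$ locally solvable with $G/S$ locally finite and take a finitely generated $H\leq G$. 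Then $H/(H\cap S)\cong HS/S$ is a finitely generated subgroup of the locally finite group $G/S$, hence finite, so $H\cap S$ has finite index in $H$. A finite-index subgroup of a finitely generated group is finitely generated, and $H\cap S\leq S$ lies in a locally solvable group, so $H\cap S$ is solvable. Thus $H$ is solvable-by-finite and $G$ is locally solvable-by-finite.

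The heart of the matter is $(i)\Rightarrow(iii)$, and I would begin with a reduction. An arbitrary finitely generated $H\leq G^+$ has each of its finitely many generators $h_j$ lying in some $Y_j^{0}$ with $Y_j$ a finitely generated subgroup of $G$ (recall that for $Y\leq G$ one has $G\cap Y=Y$, so $(G\cap Y)^{0}=Y^{0}$). Setting $Y=\langle Y_1,\ldots,Y_r\rangle$, which is again finitely generated, and invoking the compatibility established in the proof of Lemma~\ref{lem:2.4} (namely $Y_j^{0}\subseteq Y^{0}$ whenever $Y_j\leq Y$, even though the ambient matrix groups differ), one obtains $H\subseteq Y^{0}$. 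Hence it suffices to prove that $Y^{0}$ is solvable for every finitely generated $Y\leq G$, realized inside some $\mathrm{GL}_{mn}(Z(K))$ as in the setup preceding Lemma~\ref{lem:2.4}.

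Fix such a $Y$. By $(i)$, $Y$ is solvable-by-finite, say $N\trianglelefteq Y$ is solvable of finite index; then $M:=Y^{0}\cap N$ is a solvable normal subgroup of finite index in $Y^{0}$, so $Y^{0}$ is a connected, solvable-by-finite linear group. The key step, and the place I expect the main obstacle, is the algebraic-group fact that such a group must in fact be solvable. I would prove this by passing to Zariski closures in $\mathrm{GL}_{mn}(Z(K))$: the closure $\overline{M}$ of the solvable group $M$ is a closed normal solvable subgroup, and from a finite coset decomposition $Y^{0}=\bigcup_i g_i M$ one gets $\overline{Y^{0}}=\bigcup_i g_i\overline{M}$, so $\overline{M}$ is a closed subgroup of finite index in $\overline{Y^{0}}$. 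Since $Y^{0}$ is connected, its closure $\overline{Y^{0}}$ is connected, and a connected algebraic group admits no proper closed subgroup of finite index; therefore $\overline{M}=\overline{Y^{0}}$, which is then solvable. Consequently $Y^{0}\subseteq\overline{Y^{0}}$ is solvable, whence $H$ is solvable.

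This shows every finitely generated subgroup of $G^{+}$ is solvable, i.e.\ $G^{+}$ is locally solvable, completing $(i)\Rightarrow(iii)$ and the cycle. The delicate points to watch are purely those of the final paragraph: that the Zariski closure of a (solvable, normal) subgroup is again a closed (solvable, normal) subgroup, that closure commutes with finite unions and left translation, and that connectedness of $Y^{0}$ forces connectedness of its closure. All of these are standard facts about linear algebraic groups (as in the theory underlying the references to \cite{weh} used above), so the substance of the argument is the transfer of solvability from the solvable-by-finite group $Y$ down to its connected component $Y^{0}$.
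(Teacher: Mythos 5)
Your proof is correct and its skeleton coincides with the paper's: the implications $(iii)\Rightarrow(ii)$ and $(ii)\Rightarrow(i)$ are handled identically (you are in fact more careful, noting via Schreier that a finite-index subgroup of a finitely generated group is finitely generated, a step the paper silently assumes when it declares $B=A\cap H$ finitely generated), and the hard implication $(i)\Rightarrow(iii)$ rests on the same two Zariski facts --- the closure of a solvable normal subgroup is solvable and normal, and a closed finite-index subgroup must contain the connected component. Your variant of the last step (passing to $\overline{Y^0}$, which is connected, and observing it has no proper closed finite-index subgroup) is equivalent to the paper's direct appeal to $X^0\subseteq\overline{A}$ via [17, Lemma 5.3]. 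The one genuine divergence comes from the definition of $G^+$: you read it literally, with $Y$ ranging over finitely generated subgroups of $G$, so that $G\cap Y=Y$ is itself finitely generated and hypothesis $(i)$ immediately makes $Y$ solvable-by-finite. The paper's own proof instead lets $Y$ range over finitely generated subgroups of ${\rm GL}_n(D)$, so that $X=G\cap Y$ need not be finitely generated and is a priori only \emph{locally} solvable-by-finite; bridging that gap is exactly why the paper invokes the skew-linear-group result [17, Lemma 10.12] to upgrade $X$ to solvable-by-finite. Your reading makes that appeal unnecessary and yields a cleaner argument, but be aware that the paper is internally inconsistent on this point (its definition of $G^+$ says one thing, its proofs of Lemmas \ref{lem:2.4} and \ref{lem:2.5} assume the other), and the two readings do not obviously produce the same subgroup $G^+$; if the broader reading is the intended one, you would need to reinstate the reduction via [17, Lemma 10.12] for the non-finitely-generated $G\cap Y$.
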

\begin{proof} $(iii) \Longrightarrow (ii)$: This implication is obvious by Lemma \ref{lem:2.4}.

$(ii)\Longrightarrow (i)$: Suppose $H$ is a finitely generated subgroup of $G$. Then, there exists some normal locally solvable subgroup $A$ of $G$ such that $G/A$ is locally finite. If $B=A\cap H$ then $H/B\cong HA/A$, so $H/B$ is finite. Then, $B$ is finitely generated subgroup of $A$, so it is solvable.

$(i)\Longrightarrow (iii)$:  Suppose  $(i)$ holds. In the first, we prove that for every finitely generated subgroup $Y$ of ${\rm GL}_n(D)$,  the subgroup $X^0$ is solvable, where $X=G\cap Y$.  Thus, let $Y=\generate{y_1, \ldots, y_s}$ be such a subgroup of ${\rm GL}_n(D)$ . Then, $X$ is locally solvable-by-finite. Let $R$ be a subring of $D$ generated by the simple subfield of $D$ and all the entries of matrices $y_1, \ldots, y_s$. Then, $X\leq Y\leq {\rm GL}_n(R)$ and by applying [17, Lemma 10.12, p. 143], $X$ is solvable-by-finite. So, there exists a normal solvable subgroup $A$ of $X$ such that $X/A$ is finite. Denote by $K$ the division subring of $D$ generated by all the entries of all  matrices $y_1, \ldots, y_s$. Since $D$ is weakly locally finite, $K$ is a finite dimensional vector space over its center $Z(K)$. If $m=[K: Z(K)]$ then $Y$ can be viewed as a subgroup of ${\rm GL}_{mn}(Z(K))$. Consider $Y$ as a topological space with induced  Zariski's topology in ${\rm GL}_{mn}(Z(K))$ and denote by $\overline{A}$ the closure of $A$. Then, by [17, Lemma 5.9, Lemma 5.11, p. 78], $\overline{A}$ is a normal solvable subgroup of $X$.  Moreover, $\overline{A}$ is of finite index in $X$. Hence, by [17, Lemma 5.3, p. 75], $X^0\subseteq \overline{A}$ and it follows that $X^0$ is solvable. Now, suppose $H=\generate{x_1, \ldots, x_r}$ is a finitely generated subgroup of $G^+$. Then, for every $1\leq i\leq r$, there exists a finitely generated subgroup $Y_i$ of ${\rm GL}_n(D)$ such that $x_i\in (G\cap Y_i)^0$. Suppose $S_i$ is a finite generating set of $Y_i (1\leq i\leq r$) and denote by $Y_0$ the subgroup of ${\rm GL}_n(D)$ generated by all  $S_i$. If $X=G\cap Y_0$ then $H\leq X^0$ and by what we have proved above, $X^0$ is solvable, so is $H$. Thus, $G^+$ is locally solvable.
\end{proof}

\begin{lemma}\label{lem:2.6} Let $D$ be a weakly locally finite division ring and $G$ be a subgroup of $D^*$. If $G$ contains no non-cyclic free subgroups then $G$ is (locally solvable)-by-(locally finite).
\end{lemma}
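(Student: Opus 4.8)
The plan is to obtain this lemma directly by chaining Theorem~\ref{th:2.1} with Lemma~\ref{lem:2.5}, since $G$ is a subgroup of $D^{*}=\mathrm{GL}_{1}(D)$ and both prior results are available for such $G$. No new machinery should be needed: the substantive work (Tits' Alternative, Lemma~A, and the $CZ$-group/Zariski-topology construction of $G^{+}$) has already been absorbed into those two statements, so the task here is purely to identify which equivalences to invoke.

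First I would apply Theorem~\ref{th:2.1} to the weakly locally finite division ring $D$ and the subgroup $G\le D^{*}$. The hypothesis that $G$ contains no non-cyclic free subgroups is exactly statement (i) of that theorem, and by the equivalence $(i)\Longleftrightarrow(ii)$ established there, $G$ is locally solvable-by-finite. Next I would regard $G$ as a subgroup of $\mathrm{GL}_{n}(D)$ in the trivial case $n=1$, so that the hypothesis of Lemma~\ref{lem:2.5} (which requires $G\le\mathrm{GL}_{n}(D)$) is met via the identification $\mathrm{GL}_{1}(D)=D^{*}$. The conclusion just reached is precisely condition (i) of Lemma~\ref{lem:2.5}, and its equivalence $(i)\Longleftrightarrow(ii)$ then gives that $G$ is (locally solvable)-by-(locally finite), which is the desired conclusion.

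Because the result follows by composing two already-proved equivalences, there is essentially no obstacle to overcome in this final step; the only point that genuinely warrants attention is the routine observation that $D^{*}=\mathrm{GL}_{1}(D)$, which is what allows the same group $G$ to be fed into Theorem~\ref{th:2.1} (stated for $G\le D^{*}$) and into Lemma~\ref{lem:2.5} (stated for $G\le\mathrm{GL}_{n}(D)$). Thus I expect the proof to be short, amounting to two invocations and the bridging remark about $n=1$.
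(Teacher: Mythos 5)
Your proposal is correct and is essentially identical to the paper's own proof, which likewise applies Theorem~\ref{th:2.1} to get that $G$ is locally solvable-by-finite and then invokes Lemma~\ref{lem:2.5} (with $D^{*}=\mathrm{GL}_{1}(D)$) to upgrade this to (locally solvable)-by-(locally finite). Your explicit remark about the $n=1$ identification is a harmless clarification the paper leaves implicit.
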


\begin{proof} By Theorem \ref{th:2.1}, $G$ is locally solvable-by-finite. Now, by Lemma \ref{lem:2.5}, $G$ is (locally solvable)-by-(locally finite).
\end{proof}

Now, we are ready to get the main result in this paper.

\begin{theorem}\label{th:2.7} Let $D$ be a weakly locally finite division ring with center $F$. If $G$ is a non-central subnormal subgroup of $D^*$, then $G$ contains non-cyclic free subgroups.
\end{theorem}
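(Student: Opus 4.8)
The plan is to prove the contrapositive: assuming that $G$ contains \emph{no} non-cyclic free subgroup, I will show that $G$ must be central, i.e.\ $G\subseteq F$, contradicting the hypothesis that $G$ is non-central. Since all the structural work has already been assembled in the preceding lemmas, the argument is a short chain of implications, and the role of Theorem \ref{th:2.7} is essentially to package them together.

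First I would invoke Lemma \ref{lem:2.6}. Because $G$ is a subgroup of $D^*$ containing no non-cyclic free subgroup, that lemma applies verbatim and yields that $G$ is (locally solvable)-by-(locally finite). This is the only step where the free-subgroup hypothesis enters; Lemma \ref{lem:2.6} in turn rests on the equivalences of Theorem \ref{th:2.1} together with Lemma \ref{lem:2.5}, so none of that structure needs to be reproved here.

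Next I would feed this conclusion into Lemma \ref{lem:2.3}. By hypothesis $G$ is subnormal in $D^*$, the ring $D$ is weakly locally finite with center $F$, and I have just shown $G$ is (locally solvable)-by-(locally finite); these are precisely the hypotheses of Lemma \ref{lem:2.3}. Applying it gives $G\subseteq F$, so $G$ is central, contradicting the assumption. Hence a non-central subnormal $G$ cannot avoid containing non-cyclic free subgroups, which is exactly the assertion of the theorem.

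I expect the main difficulty \emph{not} to lie in this final assembly but in Lemma \ref{lem:2.3}, which is where weak local finiteness is genuinely exploited: one passes to the division subring generated by two elements of the locally solvable normal part, uses that this subring is centrally finite to reduce to the algebraic-over-center situation of Lemma \ref{lem:2.2}, and thereby forces that normal part to be abelian and central. The subtle point is that weak local finiteness does \emph{not} make $D$ itself algebraic over $F$, so Lemma \ref{lem:2.2} cannot be applied globally; the two-generator reduction is what bridges this gap. Since the excerpt permits me to assume the lemmas as stated, the proof of Theorem \ref{th:2.7} reduces to checking that the hypotheses of Lemmas \ref{lem:2.6} and \ref{lem:2.3} are met, which they are.
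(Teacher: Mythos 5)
Your proposal is correct and follows exactly the paper's own argument: assume no non-cyclic free subgroup exists, apply Lemma \ref{lem:2.6} to get that $G$ is (locally solvable)-by-(locally finite), then apply Lemma \ref{lem:2.3} to conclude $G\subseteq F$, contradicting non-centrality. Your added commentary on where the weak local finiteness is genuinely used (the two-generator reduction in Lemma \ref{lem:2.3}) is accurate but not part of the proof itself.
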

\begin{proof} Assume that $G$ contains no non-cyclic free subgroups. By Lemma \ref{lem:2.6}, $G$ is (locally solvable)-by-(locally finite). Now, by Lemma \ref{lem:2.3}, $G\subseteq F$, a contradiction.
\end{proof}

\end{document}